\newtheorem{theorem}{Theorem}
\newtheorem{corollary}[theorem]{Corollary}
\newtheorem{claim}{Claim}
\theoremstyle{definition}
\newtheorem{definition}[theorem]{Definition}
\newtheorem{example}[theorem]{Example}
\theoremstyle{remark}
\newtheorem{remark}{Remark}
\newcommand{\mc}{\mathcal}
\newcommand{\B}{\mc{B}}
\newcommand{\tn}{\textnormal}
\newcommand{\se}{\subseteq}
\newcommand{\ol}{\overline}
\newcommand{\bs}{\backslash}
\newcommand{\Int}{\tn{Int}}
\newcommand{\Ext}{\tn{Ext}}
\newcommand{\D}{\mc{D}}
\newcommand{\es}{\emptyset}
\title{On the compatible sets expansion of the Tutte polynomial}
\author{Laura Pierson}
\address{Harvard University, Cambridge, MA 02138}
\thanks{The author has no relevant financial or non-financial interests to disclose, and data sharing is not applicable to this article.}
\email{lcpierson73@gmail.com}
\begin{document}

\begin{abstract}
    Kochol \cite{Kochol21} gave a new expansion formula for the Tutte polynomial of a matroid using the notion of \emph{compatible sets}, and asked how this expansion relates to the internal-external activities formula. Here, we provide an answer, which is obtained as a special case of a generalized version of the expansion formula to Las Vergnas's trivariate Tutte polynomials of matroid perspectives \cite{LasVerg}. The same generalization to matroid perspectives and bijection with activities have been independently proven by Kochol in \cite{Kochol22} and \cite{Kochol23} in parallel with this work, but using different methods. Kochol proves both results recursively using the contraction-deletion relations, whereas we give a more direct proof of the bijection and use that to deduce the compatible sets expansion formula from Las Vergnas's activities expansion.
    
    \medskip
    \noindent \textbf{Keywords.} Tutte polynomial; matroid perspective; morphism of matroids; internal-external activities.
\end{abstract}

\maketitle

\section{Introduction}

Let $M=(E,\B)$ be a matroid on a finite ground set $E$ with $\B$ the set of bases. Let $r(M)$ be the rank of $M$ and $r_M(X)$ the rank in $M$ of a subset $X\se E$.
Let $M^*$ be the dual matroid and $r^*$ its rank function. For the fundamentals of matroid theory, we point to Oxley \cite{Oxley} or Welsh \cite{Welsh}.

\medskip
Introduced for graphs by Tutte \cite{Tutte} and generalized to matroids by Crapo \cite{Crapo}, the \emph{Tutte polynomial} admits several equivalent descriptions.  Let us recall its definition in terms of \emph{internal-external activities}:

\begin{definition}
Given a matroid $M$ on ground set $E$ with a total ordering $<$ on $E$, and an arbitrary subset $X \se E$, the set of \emph{internally active} elements of $M$ with respect to $X$ is $$\Int_M(X) := \{e\in X: (E\bs X)\cup e\tn{ contains an }M^*\tn{-circuit with $e$ as its minimal element}\},$$ and the set of \emph{externally active} elements is $$\Ext_M(X) := \{e\in E\bs X: X\cup e \tn{ contains an $M$-circuit with $e$ as its minimal element}\}.$$
\end{definition}

\begin{definition}
[Crapo \cite{Crapo}]
\label{defn:Tutte}
The \emph{Tutte polynomial} for a matroid $M=(E,\B)$ is given by $$T_M(x,y) = \sum_{B\in \B} x^{|\Int_M(B)|}y^{|\Ext_M(B)|}.$$
\end{definition}

Kochol \cite{Kochol21} gave a new expansion of Tutte polynomials using the notion of \emph{compatible set}s:

\begin{definition}
[Kochol \cite{Kochol21}]
Let $M$ be a matroid on ground set $E$ with total ordering $<$. Then a set $X\se E$ is \emph{$(M,<)$-compatible} if there is no $M$-circuit $C$ with $X\cap C=\{\min(C)\}$. The set $\D(M,<)$ is $$\D(M,<):=\{X\se E:X\tn{ is }(M^*,<)\tn{-compatible and }E\bs X\tn{ is }(M,<)\tn{-compatible}\}.$$
\end{definition}

\begin{theorem}
[Kochol \cite{Kochol21}]\label{thm:kochol}
The Tutte polynomial $T_M(x,y)$ for a matroid $M=(E,\B)$ is given by $$T_M(x,y) = \sum_{X\in \D(M,<)} x^{r(M/X)}y^{r^*(M|X)}.$$
\end{theorem}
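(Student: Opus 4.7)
The plan is to exhibit an explicit bijection $\phi\colon\B\to\D(M,<)$ between the bases of $M$ and the compatible sets, and to check that corresponding terms of the two expansions agree. The natural candidate, motivated by the classical partition of $2^E$ into Tutte intervals $[B\setminus\Int_M(B),\,B\cup\Ext_M(B)]$ indexed by bases, is
$$
\phi(B) \;=\; \bigl(B\setminus\Int_M(B)\bigr)\cup\Ext_M(B),
$$
i.e., the member of the Tutte interval of $B$ obtained by stripping off the internally active elements and adjoining the externally active ones.

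The first step is to show $\phi(B)\in\D(M,<)$. I would argue by contradiction: if $\phi(B)$ were not $(M^*,<)$-compatible, there would exist an $M$-cocircuit $C^*$ with $\phi(B)\cap C^*=\{\min C^*\}$, and standard orthogonality of circuits and cocircuits applied to the fundamental cocircuit/circuit of $B$ at $\min C^*$, together with the minimality conditions built into the definitions of $\Int$ and $\Ext$, would force a circular ordering contradiction. The symmetric statement that $E\setminus\phi(B)$ is $(M,<)$-compatible is handled dually.

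The second step is to match the monomial of $\phi(B)$ with that of $B$. The key auxiliary lemma is that for every $e\in\Ext_M(B)$, the fundamental circuit $C(B,e)$ is disjoint from $\Int_M(B)$: if some $b\in C(B,e)\cap\Int_M(B)$, the cocircuit $C^*$ witnessing $b\in\Int_M(B)$ meets $C(B,e)$ exactly in $\{b,e\}$ by orthogonality, but then $e=\min C(B,e)$ forces $e<b$ while $b=\min C^*$ forces $b<e$. Hence $\Ext_M(B)\subseteq\tn{span}_M(B\setminus\Int_M(B))$, so $r_M(\phi(B))=r(M)-|\Int_M(B)|$, and therefore $r(M/\phi(B))=|\Int_M(B)|$ and $r^*(M|\phi(B))=|\phi(B)|-r_M(\phi(B))=|\Ext_M(B)|$, matching the activities monomial.

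The step I expect to be the main obstacle is proving that $\phi$ is a bijection. Injectivity follows from the fact that $B$ can be read off $\phi(B)$ via the Tutte interval structure, but surjectivity onto $\D(M,<)$ is more subtle, since a priori one must rule out any compatible set not of this form. The cleanest route is to construct the inverse directly: given $X\in\D(M,<)$, build a basis $B$ by a greedy scan of $E$ in the given order, where compatibility of $X$ and of $E\setminus X$ is used at each step to decide consistently whether the current element should be placed in $B$ and to guarantee the outcome is independent and spanning. A simultaneous induction then shows that the resulting $B$ satisfies $\Int_M(B)=B\setminus X$, $\Ext_M(B)=X\setminus B$, and hence $\phi(B)=X$. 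Combining the bijection with Definition~\ref{defn:Tutte} yields Theorem~\ref{thm:kochol}.
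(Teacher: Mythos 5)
Your forward map $\phi$ is exactly the map $f$ of Theorem~\ref{thm:bijection} specialized to $M=M'$, and your orthogonality lemma for the monomial count (that the fundamental circuit of an externally active element avoids $\Int_M(B)$) is correct and arguably slicker than the paper's route, which instead shows directly that $\Ext_M(B)$ is a basis of $(M|X)^*$ and $\Int_M(B)$ a basis of $M/X$. But there are two genuine gaps. First, your argument that $\phi(B)\in\D(M,<)$ is not a proof: a single application of circuit--cocircuit orthogonality to a fundamental circuit and cocircuit does not dispose of a bad circuit $C$ with $C\cap(E\bs\phi(B))=\{\min C\}$, because $C$ need not be a fundamental circuit of $B$ and may contain several elements of $\Ext_M(B)$. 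The paper needs an iterated strong-circuit-elimination argument, repeatedly eliminating the elements of $C\cap(E\bs B)$ against their fundamental circuits until the offending circuit lies in $B\cup\{\min C\}$, at which point one contradicts either $\min C\notin\Ext_M(B)$ or the independence of $B$. Your ``circular ordering contradiction'' would have to be replaced by an argument of this shape.

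Second, and more seriously, surjectivity is asserted rather than proved. You correctly flag it as the main obstacle, but the ``greedy scan using compatibility at each step'' is unspecified: no rule is given for when an element enters $B$, no argument that the output is independent and spanning, and no proof of the claimed invariants $\Int_M(B)=B\bs X$ and $\Ext_M(B)=X\bs B$. This is exactly where the bulk of the paper's work lies: the inverse is the explicit map $g(X)=X\bs B_{\min}((M|X)^*)\cup B_{\min}(M/X)$, and verifying that $g$ lands in the set of bases and that $g\circ f$ and $f\circ g$ are both the identity occupies three of the paper's four claims, including delicate exchange arguments to identify $\Ext_M(B)$ with $B_{\min}((M|X)^*)$ and (in the perspective setting) a hyperplane/cocircuit argument to show $\Ext_M(B)\se X$. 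Your greedy construction is plausibly equivalent to computing these two minimal bases, but as written the central step of the theorem is missing.
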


Kochol originally proved this using the contraction-deletion property of the Tutte polynomial \cite{Kochol21}, but asked whether there is a direct proof using the internal-external activities formula. Here, we answer this question in the more general context of Tutte polynomials of \emph{matroid perspectives}:

\begin{definition}
A pair $(M, M')$ of matroids on a common ground set $E$ is a \emph{matroid perspective} (also called a \emph{morphism of matroids}) if every circuit of $M$ is a union of circuits of $M'$.
\end{definition}

\begin{example}
For graphical matroids, one can obtain a matroid perspective by identifying some of the vertices of the graph with each other (see Figure \ref{fig:example_matroids}), and for general realizable matroids, one can obtain a matroid perspective from a linear map.
\end{example}

Las Vergnas introduced the following \emph{trivariate Tutte polynomial} of a matroid perspective in \cite{LasVerg} and further studied its properties in \cite{LasVerg2} and \cite{LasVerg3}.
It admits several different descriptions, just like the Tutte polynomial of a matroid. (See \cite{GioanHB} and \cite{GioanDM} for more on the Tutte polynomial of matroid perspectives and related generalizations of the Tutte polynomial.) We focus here on the description in terms of internal-external activities:

\begin{theorem}
[Las Vergnas \cite{LasVerg}]\label{thm:lasverg}
For a matroid perspective $(M, M')$ on ground set $E$ with total ordering $<$, the trivariate Tutte polynomial is given by $$T_{M, M'}(x,y,z) = \sum_{\substack{B\tn{ independent in }M, \\ \tn{spanning in }M'}} x^{|\Int_{M'}(B)|}y^{|\Ext_M(B)|}z^{r(M) - r(M') - (r_M(B) - r_{M'}(B))}.$$
\end{theorem}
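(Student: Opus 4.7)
My plan is to follow the classical Crapo-style proof of the internal-external activities expansion, adapted to the matroid perspective setting. The starting point is a corank-nullity description of $T_{M,M'}(x,y,z)$ of the form
$$T_{M,M'}(x,y,z)=\sum_{X\se E}(x-1)^{r(M')-r_{M'}(X)}(y-1)^{|X|-r_M(X)}z^{(r(M)-r_M(X))-(r(M')-r_{M'}(X))},$$
whose $z$-exponent is nonnegative because the perspective hypothesis forces $r_M-r_{M'}$ to be monotone with values in $[0,r(M)-r(M')]$. The goal is to partition the sum over $X$ into blocks indexed by subsets $B$ that are independent in $M$ and spanning in $M'$, so that each block collapses to a single monomial matching the right-hand side of Theorem \ref{thm:lasverg}.

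The key combinatorial step is to show that the intervals
$$\mc{I}(B):=[B\setminus \Int_{M'}(B),\,B\cup \Ext_M(B)]$$
partition $2^E$ as $B$ ranges over the $M$-independent, $M'$-spanning subsets. To prove this, I would define for each $X\se E$ a canonical $B(X)$ via a single $<$-ordered greedy pass: process the elements of $E$ from smallest to largest and toggle membership in the running set by exchanging along a fundamental $M$-circuit (to restore $M$-independence after an addition) or a fundamental $M'^*$-cocircuit (to restore $M'$-spanning after a deletion). One then checks that the output $B(X)$ is $M$-independent and $M'$-spanning, that $X\mapsto B(X)$ is well defined, and that an element $e$ lies in the symmetric difference $X\triangle B(X)$ precisely when $e$ is externally active in $M$ or internally active in $M'$ for $B(X)$. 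This identifies $\mc{I}(B(X))$ with the fiber of $X\mapsto B(X)$ and yields the partition.

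Once the partition is in place, the rest is a direct computation. For $X\in \mc{I}(B)$ one has $r_{M'}(X)=r_{M'}(X\cap B)$ and $r_M(X)=r_M(X\cup B)$, so the corank and nullity exponents depend only on $|X\cap \Int_{M'}(B)|$ and $|X\cap\Ext_M(B)|$, while $r_M(X)-r_{M'}(X)$ is constant on $\mc{I}(B)$ and equal to $r_M(B)-r_{M'}(B)$. Summing over $X\in \mc{I}(B)$ then collapses the $(x-1)$ and $(y-1)$ binomials to $x^{|\Int_{M'}(B)|}y^{|\Ext_M(B)|}$, producing exactly the stated monomial.

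The main obstacle is the partition step. It is here that the perspective hypothesis enters essentially: because every $M$-circuit is a union of $M'$-circuits, the dual condition that every $M'$-cocircuit is a union of $M$-cocircuits also holds, so the external corrections (governed by $M$-circuits) and the internal corrections (governed by $M'^*$-cocircuits) can be carried out compatibly without interfering, allowing the greedy procedure to produce a set that is simultaneously $M$-independent and $M'$-spanning. Once this compatibility is verified, the rest of the argument parallels the single-matroid case.
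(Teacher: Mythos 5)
First, note that the paper does not actually prove Theorem~\ref{thm:lasverg}: it is quoted as an input from Las Vergnas \cite{LasVerg}, and the paper's contribution is to derive Corollary~\ref{cor:expansion} from it via the bijection of Theorem~\ref{thm:bijection}. So there is no internal proof to compare against, and your proposal must be judged on its own terms. The overall strategy --- start from the corank--nullity form of $T_{M,M'}$, partition $2^E$ into the Boolean intervals $\mc{I}(B)=[B\bs\Int_{M'}(B),\,B\cup\Ext_M(B)]$, and collapse each interval to a single monomial --- is the standard route and is essentially how the result is established in the literature.

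As written, however, the proposal has two genuine gaps. The more serious one is that the partition of $2^E$ by the intervals $\mc{I}(B)$ is precisely the hard content of the theorem, and you dispose of it with ``one then checks.'' Remark 2 of the paper records that this partition is a nontrivial result of Las Vergnas \cite{LasVerg3} whose proof had a missing detail completed only later by Gioan \cite{GioanHB}; your greedy toggling procedure is not shown to be well defined (why does an exchange restoring $M$-independence not destroy the $M'$-spanning property, and why is the output insensitive to the order of corrections?), nor is it shown that the fibers of $X\mapsto B(X)$ are exactly the intervals. Second, the rank identity used to collapse an interval is misstated: for $X\in\mc{I}(B)$ one has $r_M(X\cup B)=r_M(B)=|B|$, since every element of $\Ext_M(B)$ lies in the $M$-closure of $B$, so ``$r_M(X)=r_M(X\cup B)$'' would give nullity $|X|-|B|$, which is wrong (and can be negative) whenever $X$ omits an internally active element. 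The correct identity is $r_M(X)=r_M(X\cap B)=|X\cap B|$, and proving it requires showing that the fundamental $M$-circuit of each $e\in X\cap\Ext_M(B)$ avoids $\Int_{M'}(B)$. This non-interaction of internal and external activity is exactly where the perspective hypothesis enters (an $M$-circuit is a union of $M'$-circuits, each of which must meet the $M'$-cocircuit witnessing internal activity in at least two elements), and it needs to be proved rather than asserted. With those two points supplied the computation does close up, but as it stands the proposal is an outline of the known argument rather than a proof.
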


Note that when $M = M'$, one recovers the internal-external activities formula from Definition~\ref{defn:Tutte}.

\begin{remark}
Trivariate Tutte polynomials can also be defined for a general pair of matroids $M$ and $M'$ (see \cite{GLRV}), but in general this polynomial does not have all positive coefficients or a known internal-external activities expansion.
\end{remark}

Kochol's notion of compatible sets can be extended to matroid perspectives as follows:

\begin{definition}
[Kochol \cite{Kochol22}]
Given a matroid perspective $(M, M')$ on ground set $E$ with total ordering $<$, define $$\mc{D}(M, M', <) = \{X\se E: \tn{$X$ is $((M')^*,<)$-compatible and $E\bs X$ is $(M,<)$-compatible}\}.$$
\end{definition}

Note that when $M = M'$, one recovers that $\mc{D}(M,M',<) = \mc{D}(M,<)$.
Our main theorem (independently proven by Kochol in \cite{Kochol23} using the contraction-deletion relations) gives a bijection between internal-external activities of matroid perspectives and compatible subsets.  To state it, we recall the notion of \emph{minimal bases} (see \cite{Oxley}): A total order $<$ on the ground set of a matroid $M$ induces a total lexicographic order on its bases. Denote by $B_{\min}(M)$ the minimal basis of $M$ under this ordering. Then our main result is as follows:

\begin{theorem}[independently proven by Kochol in \cite{Kochol23}]\label{thm:bijection}
Suppose $(M, M')$ is a matroid perspective on ground set $E$ with total ordering $<$. Then there is a bijection $$\{B\tn{ independent in }M,\tn{ spanning in }M'\} \longleftrightarrow \mc{D}(M, M', <),$$ given by $$f(B) := B\bs \Int_{M'}(B) \cup \Ext_M(B),$$ with inverse mapping $$g(X) := X \bs B_{\min}((M|X)^*) \cup B_{\min}(M'/X).$$
\end{theorem}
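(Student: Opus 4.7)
The plan is to verify that $f$ and $g$ are well-defined on the stated domains and mutually inverse, using two standard structural facts: circuit-cocircuit orthogonality for matroid perspectives (for any $M$-circuit $C$ and any $M'^*$-circuit $D$, $|C \cap D| \ne 1$), and the classical characterization that a basis $B$ of a matroid $N$ equals $B_{\min}(N)$ if and only if $\Int_N(B) = B$. That $g$ is well-defined is routine: for any $X \se E$, writing $g(X) = (X \bs P) \cup Q$ with $P = B_{\min}((M|X)^*)$ and $Q = B_{\min}(M'/X)$, the perspective property forces every $M'$-independent set to be $M$-independent and makes the $M$- and $M'$-rank increments from $X$ to $X \cup Q$ coincide, so $Q$ is $M/X$-independent; combined with $X \bs P$ being a basis of $M|X$ this gives $g(X)$ independent in $M$. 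Likewise $r_{M'}(X \bs P) = r_{M'}(X)$ (by the perspective-rank inequality), so $g(X)$ and $g(X) \cup X$ have the same $M'$-closure, which yields $g(X)$ spanning in $M'$.

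The heart of the argument is the following key lemma: for $B$ independent in $M$, spanning in $M'$, and $X := f(B)$, every $e \in \Ext_M(B)$ has its fundamental $M$-circuit $C(e, B)$ contained in $(B \cap X) \cup \{e\}$. Indeed, if some $e' \in C(e,B) \bs \{e\}$ lay in $\Int_{M'}(B)$, witnessed by an $M'^*$-circuit $D$ with $\min(D) = e'$ and $D \bs \{e'\} \se E \bs B$, then $e \notin D$ (since $e < e' = \min(D)$), so any element of $C(e,B) \cap D$ other than $e'$ would simultaneously lie in $B \cup \{e\}$ and $(E \bs B) \cup \{e'\}$, forcing it to equal $e$---contradicting $e \notin D$ and orthogonality. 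From this lemma, $B \cap X$ is a basis of $M|X$ and each $e \in \Ext_M(B)$ is internally active in $(M|X)^*$ as witnessed by $C(e,B) \se X$, so $\Ext_M(B) = B_{\min}((M|X)^*)$. Applying the same argument to the dual perspective $(M'^*, M^*)$ with basis candidate $E \bs B$ (using the identities $\Int_{M^*}(E \bs B) = \Ext_M(B)$ and $\Ext_{M'^*}(E \bs B) = \Int_{M'}(B)$, which give $E \bs f(B) = f_{(M'^*, M^*)}(E \bs B)$) yields $\Int_{M'}(B) = B_{\min}(M'/X)$, and combining the two identifications gives $g(f(B)) = B$.

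It remains to show $f(B) \in \mc{D}(M, M', <)$ and $f(g(X)) = X$ for $X \in \mc{D}$. For compatibility of $E \bs X$ with $M$, consider an $M$-circuit $C$ with $(E \bs X) \cap C = \{\min C\}$ and split cases on the location of $\min C$. If $\min C \in \Int_{M'}(B)$, orthogonality with the $M'^*$-circuit inside $E \bs X$ certifying $\min C$ as internally active in $M'/X$ gives a direct contradiction. If $\min C \in (E \bs B) \bs \Ext_M(B)$, then $\min C \notin \Ext_M(B)$ forces some $e' \in C \cap \Ext_M(B)$; applying $M$-circuit elimination at $e'$ between $C$ and the key-lemma circuit $C(e', B) \se X$ yields an $M$-circuit $C''$ with the same minimum, the same compatibility violation, and strictly smaller $|C'' \bs (B \cup \{\min C\})|$, so iteration eventually produces an $M$-circuit contained in $B \cup \{\min C\}$ minimized at $\min C$, contradicting $\min C \notin \Ext_M(B)$. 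The symmetric argument handles compatibility of $X$ with $M'^*$. For $f(g(X)) = X$, writing $B := g(X) = (X \bs P) \cup Q$, the inclusions $P \se \Ext_M(B)$ and $Q \se \Int_{M'}(B)$ are immediate from the minimum-basis characterization of $P$ and $Q$; the reverse inclusions use the compatibility of $E \bs X$ (respectively $X$) together with orthogonality in a similar case-split. I expect the circuit-elimination iteration verifying $f(B) \in \mc{D}$ to be the main obstacle, as it requires setting up a decreasing measure carefully so that the compatibility violation is preserved at each step until the terminal contradiction.
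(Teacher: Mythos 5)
Your proposal is correct, and it takes a genuinely different route from the paper in the two places that matter most. The paper's proof of $g(f(B))=B$ establishes that $\Ext_M(B)$ is the minimal basis of $(M|X)^*$ by a separate spanning/independence argument followed by an iterated strong-circuit-elimination argument for minimality; your key lemma --- that the fundamental circuit $C(e,B)$ of each $e\in\Ext_M(B)$ avoids $\Int_{M'}(B)$, proved in one step from circuit--cocircuit orthogonality for perspectives ($|C\cap D|\ne 1$ for an $M$-circuit $C$ and an $(M')^*$-circuit $D$, which follows from the same ``every $(M')^*$-circuit is a union of $M^*$-circuits'' fact the paper invokes) --- packages both the basis property and, via the characterization of $B_{\min}$ as the basis all of whose elements are internally active, the minimality, in a single structural statement. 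The gain is larger in $f(g(X))=X$: for the inclusion $\Ext_M(B)\se X$ the paper resorts to a rather delicate hyperplane argument (showing a putative $M^*$-circuit would meet the hyperplane $\ol{E\bs B - e}$ in all but one element), whereas your case split --- either $C(e,B)$ meets $E\bs X$ only in its minimum, contradicting compatibility, or it meets $B_{\min}(M'/X)$ at some $e'$ whose witnessing $(M')^*$-circuit $D$ satisfies $C(e,B)\cap D=\{e'\}$, contradicting orthogonality --- closes the same gap more cheaply; I checked that this works as stated. For the claim $f(B)\in\D(M,M',<)$ your Case 2 iteration coincides essentially with the paper's Claim 1 (same decreasing measure $|C\cap\Ext_M(B)|$, same preservation of the violation), and your Case 1 correctly relies on the dual form of your key lemma to place the witnessing $(M')^*$-circuit inside $E\bs X$; the well-definedness of $g$ via the rank-increment characterization of perspectives matches the paper's brief Claim 2. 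The only deferred details (the ``similar case-split'' for the reverse inclusions and the orthogonality fact itself) do go through with the tools you name, so there is no gap --- just be sure to record the one-line reduction of perspective orthogonality to ordinary circuit--cocircuit orthogonality when you write it up.
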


The reader may consult Example~\ref{ex:bijection_example} illustrating Theorem~\ref{thm:bijection}.

\begin{remark}
It was proven by Las Vergnas in \cite{LasVerg3} (with a missing detail completed by Gioan in \cite{GioanHB}) that for a matroid perspective $(M, M')$ with ground set $E,$ the Boolean intervals $[B\bs\Int_{M'}(B),B\cup\Ext_M(B)]$ partition $2^E$ (where $B$ ranges over sets that are independent in $M$ and spanning in $M'$). The set $B$ is a special representative of its corresponding interval, and the bijection in Theorem \ref{thm:bijection} can be thought of as associating to $B$ and to its interval another special representative $f(B)$ of the same interval. Thus, this bijection provides a translation between the interpretation of these intervals in terms of bases and a corresponding interpretation in terms of compatible sets.
\end{remark}

We obtain as a corollary the following compatible sets expansion of the Tutte polynomial of a matroid perspective, which matches Kochol's formula from \cite{Kochol22} and reduces to Kochol's original formula from \cite{Kochol21} for the case $M=M'$:

\begin{corollary}[Kochol \cite{Kochol22}]\label{cor:expansion}
For a matroid perspective $(M, M')$, the Tutte polynomial can be expressed as $$T_{M,M'}(x,y,z) = \sum_{X\in \mc{D}(M, M', <)} x^{r(M'/X)}y^{r^*(M|X)}z^{r(M) - r(M') - (r_M(X) - r_{M'}(X))}.$$
\end{corollary}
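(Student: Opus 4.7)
The plan is to apply the bijection $f$ of Theorem~\ref{thm:bijection} to Las Vergnas's activities expansion (Theorem~\ref{thm:lasverg}) and verify that the monomial exponents match term by term. Setting $X = f(B) = (B \setminus \Int_{M'}(B)) \cup \Ext_M(B)$, using $r_{M'}(B) = r(M')$, $r_M(B) = |B|$, $|X| = |B| - |\Int_{M'}(B)| + |\Ext_M(B)|$, and the identities $r(M'/X) = r(M') - r_{M'}(X)$, $r^*(M|X) = |X| - r_M(X)$, the three exponent equalities reduce to the two rank formulas
\[
r_M(X) = |B| - |\Int_{M'}(B)|, \qquad r_{M'}(X) = r_{M'}(B) - |\Int_{M'}(B)|.
\]

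The main obstacle is the first formula. Since $B \setminus \Int_{M'}(B)$ is $M$-independent of the correct size, it suffices to show $\Ext_M(B) \se \tn{cl}_M(B \setminus \Int_{M'}(B))$. For $e \in \Ext_M(B)$, let $C$ be the $M$-circuit in $B \cup e$ with $\min C = e$; I will show $C \cap \Int_{M'}(B) = \es$. Given a putative $f \in C \cap \Int_{M'}(B)$ with associated $M'$-cocircuit $D \se (E \setminus B) \cup f$ and $\min D = f$, the matroid perspective hypothesis lets us decompose $C$ into $M'$-circuits, one of which, $C'$, contains $f$. Then $C' \cap D \se (B \cup e) \cap ((E \setminus B) \cup f) = \{e, f\}$ with $f \in C' \cap D$, so circuit-cocircuit orthogonality in $M'$ forces $|C' \cap D| \neq 1$ and hence $e \in D$; but then $f = \min D \leq e = \min C \leq f$, forcing $e = f$, contradicting $e \notin B \ni f$. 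This step is where the matroid perspective hypothesis is essential.

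For the second formula, the matroid perspective property gives $\tn{cl}_M(A) \se \tn{cl}_{M'}(A)$ for every $A \se E$ (any $M$-circuit through $e$ inside $A \cup e$ contains an $M'$-circuit through $e$ inside $A \cup e$), so the first formula upgrades to $\Ext_M(B) \se \tn{cl}_{M'}(B \setminus \Int_{M'}(B))$ and hence $r_{M'}(X) = r_{M'}(B \setminus \Int_{M'}(B))$. For each $e \in \Int_{M'}(B)$, the defining cocircuit $D_e$ satisfies $D_e \cap B = \{e\}$, so $B \setminus e$ does not intersect $D_e$ and is not spanning in $M'$; together with $r_{M'}(B) = r(M')$ this yields $r_{M'}(B \setminus e) = r_{M'}(B) - 1$, i.e., $e \notin \tn{cl}_{M'}(B \setminus e)$. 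The standard characterization of matroid independence via closure then shows $\Int_{M'}(B)$ is independent in $M'/(B \setminus \Int_{M'}(B))$, which is equivalent to $r_{M'}(B) - r_{M'}(B \setminus \Int_{M'}(B)) = |\Int_{M'}(B)|$. Substituting back into Las Vergnas's formula yields the claimed compatible sets expansion.
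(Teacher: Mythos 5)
Your proof is correct, and the top-level strategy is the same as the paper's: push Las Vergnas's activities expansion (Theorem~\ref{thm:lasverg}) through the bijection of Theorem~\ref{thm:bijection} and match exponents, reducing everything to the two rank identities $r_M(X) = |B| - |\Int_{M'}(B)|$ and $r_{M'}(X) = r_{M'}(B) - |\Int_{M'}(B)|$, exactly as in the paper's computation. Where you genuinely diverge is in how those identities are established. The paper reads them off from the inverse map $g$: since $g(f(B))=B$, the sets $\Int_{M'}(B)$ and $\Ext_M(B)$ coincide with $B_{\min}(M'/X)$ and $B_{\min}((M|X)^*)$ and are therefore bases of $M'/X$ and $(M|X)^*$, so $|\Int_{M'}(B)| = r(M'/X)$ and $|\Ext_M(B)| = r^*(M|X)$ are immediate. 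You use only the forward map and its bijectivity, and re-derive the rank facts from scratch: a circuit--cocircuit orthogonality argument in $M'$ (using the perspective hypothesis to extract an $M'$-circuit $C'\se C$ through $f$, then forcing $e\in D$ and the contradiction $e=f$) shows the fundamental circuit of each $e\in\Ext_M(B)$ avoids $\Int_{M'}(B)$, giving $X\se \tn{cl}_M(B\bs\Int_{M'}(B))$; and the cocircuits witnessing internal activity show each $e\in\Int_{M'}(B)$ lies outside $\tn{cl}_{M'}(B\bs e)$. Both steps check out (including the inclusion $\tn{cl}_M(A)\se\tn{cl}_{M'}(A)$ from the perspective property). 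In effect you give an independent proof that $\Int_{M'}(B)$ and $\Ext_M(B)$ are bases of $M'/X$ and $(M|X)^*$ (without the minimality statement), which the paper instead obtains inside the proofs of Claims 3 and 4 via repeated strong circuit elimination. Your route costs some extra matroid work but makes the corollary depend only on the bijectivity of $f$, not on the explicit formula for $g$; the paper's version is shorter because it reuses the structure already built into Theorem~\ref{thm:bijection}.
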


In addition to the special case $M=M',$ one can also consider the special case where $M'=0$ is the uniform matroid of rank 0. In that case, it follows from the convolution formula for the Las Vergnas Tutte polynomial that $T_{M,0}(x,y,z) = T_M(z+1,y)$ (see \cite{LasVerg}). Thus, setting $z=x-1$ gives the following alternate compatible sets expansion of the ordinary Tutte polynomial as a special case of Corollary \ref{cor:expansion}:

\begin{corollary}\label{cor:M'=0}
The ordinary Tutte polynomial has the compatible sets expansion $$T_M(x,y) = \sum_{X\tn{ s.t. }E\bs X \tn{ is }(M,<)\tn{-compatible}} (x-1)^{r(M/X)}y^{r^*(M|X)}.$$
\end{corollary}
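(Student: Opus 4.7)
The plan is to apply Corollary \ref{cor:expansion} to the matroid perspective $(M, 0)$, where $0$ denotes the rank-$0$ uniform matroid on $E$, and then specialize $z = x-1$ using the identity $T_{M,0}(x,y,z) = T_M(z+1, y)$ recalled from \cite{LasVerg}.

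First I would check that $(M, 0)$ really is a matroid perspective: in $M' = 0$ every element is a loop, so every singleton is an $M'$-circuit, and hence every non-empty subset of $E$---in particular every $M$-circuit---is a union of $M'$-circuits. Next I would simplify the summand in Corollary \ref{cor:expansion} under the substitution $M' = 0$. Since $r_{M'}(Y) = 0$ for every $Y \se E$, we have $r(M'/X) = 0$, so the factor $x^{r(M'/X)}$ equals $1$, and the $z$-exponent reduces to
\[
r(M) - r(M') - (r_M(X) - r_{M'}(X)) \;=\; r(M) - r_M(X) \;=\; r(M/X).
\]
Moreover, the dual $(M')^*$ is the free matroid on $E$, which has no circuits, so the condition that $X$ be $((M')^*,<)$-compatible holds vacuously, and therefore
\[
\mc{D}(M, 0, <) \;=\; \{X \se E : E \bs X \tn{ is } (M,<)\tn{-compatible}\}.
\]
Plugging these simplifications into Corollary \ref{cor:expansion} yields
\[
T_{M,0}(x,y,z) \;=\; \sum_{X :\; E\bs X \tn{ is } (M,<)\tn{-compatible}} y^{r^*(M|X)}\, z^{r(M/X)}.
\]

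Finally I would specialize $z = x - 1$; by the cited identity, $T_{M,0}(x,y,x-1) = T_M(x,y)$, and substituting $z = x-1$ into the above sum gives precisely the claimed expansion. The argument is essentially bookkeeping once Corollary \ref{cor:expansion} is available; the only mild point requiring attention is the vanishing of the $((M')^*,<)$-compatibility condition, and there is no substantive obstacle.
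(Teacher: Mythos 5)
Your proposal is correct and follows essentially the same route as the paper: both deduce the statement from Corollary~\ref{cor:expansion} with $M'=0$, observing that $(M')^*$ has no circuits so the first compatibility condition is vacuous, that $r(M'/X)=0$ kills the $x$-factor, that the $z$-exponent becomes $r(M/X)$, and then specializing $z=x-1$ via $T_{M,0}(x,y,z)=T_M(z+1,y)$. Your added verification that $(M,0)$ is indeed a matroid perspective is a nice touch the paper leaves implicit.
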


This follows from Corollary \ref{cor:expansion} because if $M'=0$, then there are no $(M')^*$ circuits and therefore all sets $X$ are $((M')^*,<)$-compatible, and also $r(M'/X)=0$ for every $X$, so all the powers of $x$ are 0, while the powers of $z=x-1$ are $r(M)-r_M(X)=r(M/X).$ 

The proofs of Theorem \ref{thm:bijection} and Corollary \ref{cor:expansion} will be given in Section \ref{sec:proof}. We finish this section with an example of the bijection in Theorem \ref{thm:bijection}:

\begin{example}\label{ex:bijection_example}
Let $M$ and $M'$ be the two graphical matroids shown in Figure \ref{fig:example_matroids}, so $E=\{1,2,3,4,5\}$, $r(M)=3$, $r(M')=2$, and $M'$ is formed by identifying two vertices of $M$ with each other. The dual matroid $(M')^*$ is also shown, since it is used for computing internal activities.

\begin{figure}[H]
    \centering
    \includegraphics[width=15cm]{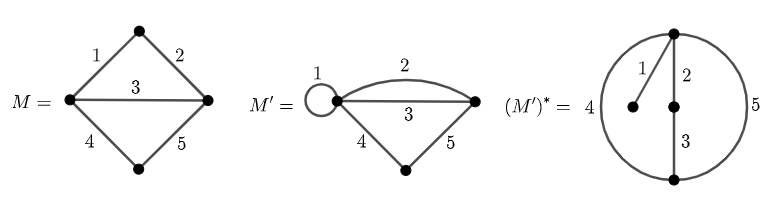}
    \caption{Sample graphical matroid perspective $(M, M')$ used in Example \ref{ex:bijection_example}, along with $(M')^*.$}
    \label{fig:example_matroids}
\end{figure}

Figure \ref{tab:bijection_table} lists the sets $B$ which are independent in $M$ and spanning in $M'$ along with the corresponding internal and external activities, compatible sets $X$, and Tutte polynomial terms.  We find from the table that the Tutte polynomial in this case is given by $$T_{M,M'}(x,y,z) = x^2z+x^2+y^2+xy+2xz+yz+2x+2y+z+1.$$

\begin{table}[h]
    \centering
    \begin{tabular}{c|c|c|c|c}
        $B$ & $\Int_{M'}(B)$ & $\Ext_M(B)$ & $X$ &  Term \\
        \hline
        $\{2,4\}$ & $\{2,4\}$ & $\es$   & $\es$ & $x^2z$ \\
        \hline
        $\{2,5\}$ & $\{2\}$ & $\es$ & $\{5\}$ & $xz$ \\
        \hline 
        $\{3,4\}$ & $\{4\}$ & $\es$ & $\{3\}$ & $xz$ \\
        \hline
        $\{3,5\}$ & $\es$ & $\es$ & $\{3,5\}$ & $z$ \\
        \hline
        $\{4,5\}$ & $\es$ & $\{3\}$ & $\{3,4,5\}$ & $yz$ \\
        \hline
        $\{1,2,4\}$ & $\{2,4\}$ & $\es$ & $\{1\}$ & $x^2$ \\
        \hline
        $\{1,2,5\}$ & $\{2\}$ & $\es$  & $\{1,5\}$ & $x$ \\
        \hline
        $\{1,3,4\}$ & $\{4\}$ & $\es$ & $\{1,3\}$ & $x$ \\
        \hline
        $\{1,3,5\}$ & $\es$ & $\es$ & $\{1,3,5\}$ & 1 \\
        \hline
        $\{1,4,5\}$ & $\es$ & $\{3\}$ & $\{1,3,4,5\}$ & $y$ \\
        \hline
        $\{2,3,4\}$ & $\{4\}$ & $\{1\}$ & $\{1,2,3\}$ & $xy$ \\
        \hline
        $\{2,3,5\}$ & $\es$ & $\{1\}$ & $\{1,2,3,5\}$ & $y$ \\
        \hline
        $\{2,4,5\}$ & $\es$ & $\{1,3\}$ & $\{1,2,3,4,5\}$ & $y^2$ \\
    \end{tabular}
    \vspace{\abovecaptionskip}    \caption{Table illustrating Theorem \ref{thm:bijection} for the matroid perspective in Figure \ref{fig:example_matroids}.}
    \label{tab:bijection_table}
\end{table}

As an example of how to compute one of the rows, consider $B=\{2,3,4\}$. To compute $\Ext_M(B)$, we determine for each $e\not\in B$ whether $e$ is the minimal element of some circuit in $B\cup e.$ If $e=1,$ we get the circuit $\{1,2,3\}$, in which 1 is the minimal element, so 1 is externally active, but if $e=5$, we get the circuit $\{3,4,5\}$, in which 5 is not the minimal element, so 5 is not externally active. Thus, $\Ext_M(B)=\{1\}$. Similarly, for $\Int_{M'}(B)$, we consider the set $E\bs B=\{1,5\}$ in the matroid $(M')^*$, and determine whether each $e\in B$ is the minimal element of some $(M')^*$ circuit in $E\bs B \cup e.$ For $e=2$ and $e=3$, no $(M')^*$-circuits are created by adding them to the set $\{1,5\}$, so they are not internally active, but for $e=4$, the circuit $\{4,5\}$ is created, which does have 4 as its minimal element. Thus, $\Int_{M'}(B)=\{4\}.$ It follows that the corresponding set $X$ is $$X = \{2,3,4\}\bs \{4\} \cup \{1\} = \{1,2,3\},$$ and that the corresponding Tutte polynomial term is $xy$. One can verify that this set $X$ is indeed in $\D(M,M',<)$, because no $(M')^*$-circuit intersects $\{1,2,3\}$ only at the minimal element of the circuit, and no $M$-circuit intersects its complement $\{4,5\}$ only at the minimal element of the circuit. The remaining rows of the table can be checked similarly.
\end{example}

\section{Proof of the bijection and the expansion formula}\label{sec:proof}

We will break the proof of Theorem \ref{thm:bijection} into the following four claims:

\begin{claim}
If $B$ is independent in $M$ and spanning in $M'$, then $X = f(B) \in \D(M, M', <).$
\end{claim}

\begin{claim}
If $X\in \D(M, M', <)$, then $B = g(X)$ is independent in $M$ and spanning in $M'.$
\end{claim}

\begin{claim}
If $B$ is independent in $M$ and spanning in $M'$, then $g(f(B))=B.$
\end{claim}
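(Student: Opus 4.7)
The plan is to let $X=f(B)$ and reduce to the two identities $B_{\min}((M|X)^*)=\Ext_M(B)$ and $B_{\min}(M'/X)=\Int_{M'}(B)$. These suffice because the disjoint decomposition $X=(B\bs\Int_{M'}(B))\cup\Ext_M(B)$ (together with $\Int_{M'}(B)\se B$ and $\Ext_M(B)\cap B=\es$) yields $X\bs\Ext_M(B)=B\bs\Int_{M'}(B)$, so
$$g(X)=(X\bs\Ext_M(B))\cup\Int_{M'}(B)=(B\bs\Int_{M'}(B))\cup\Int_{M'}(B)=B.$$

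The central tool will be an orthogonality lemma for matroid perspectives: for any $M$-circuit $C$ and any $(M')^*$-circuit $D$, $|C\cap D|\neq 1$. I will prove this by decomposing $C$ as a union of $M'$-circuits $C'_i$ (the definition of matroid perspective) and observing that if $|C\cap D|=1$, some single $C'_i$ would meet $D$ in exactly that one element, contradicting ordinary circuit--cocircuit orthogonality in $M'$.

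For the first identity, I pass to the dual: $B_{\min}((M|X)^*)=\Ext_M(B)$ is equivalent to $B\bs\Int_{M'}(B)$ being the lex-\emph{maximum} basis of $M|X$. It is $M$-independent as a subset of $B$. For each $e\in\Ext_M(B)$, the $M$-circuit $C_e\se B\cup e$ with $\min(C_e)=e$ must actually lie in $(B\bs\Int_{M'}(B))\cup e$: otherwise, picking $e'\in C_e\cap\Int_{M'}(B)$ and the corresponding $(M')^*$-circuit $D_{e'}\se(E\bs B)\cup e'$ with $\min(D_{e'})=e'$, a direct set computation gives $C_e\cap D_{e'}\se\{e,e'\}$, which the orthogonality lemma forces to have size $2$; the minimality conditions $e=\min(C_e)\le e'$ and $e'=\min(D_{e'})\le e$ then collapse to $e=e'$, contradicting $e\notin B$ and $e'\in B$. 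Having placed $C_e$ inside $(B\bs\Int_{M'}(B))\cup e$, it follows that $B\bs\Int_{M'}(B)$ spans $X$ in $M$ and that $C_e$ is the fundamental $M|X$-circuit of $e$ with respect to this basis; since $\min(C_e)=e$, the standard fundamental-circuit characterization of the lex-maximum basis applies.

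The second identity is proved by the parallel dual argument. Using $(M'/X)^*=(M')^*|(E\bs X)$ and duality, it reduces to showing that $(E\bs X)\bs B$ is the lex-maximum basis of $(M')^*|(E\bs X)$. Independence follows because $B$ being spanning in $M'$ is equivalent to $E\bs B$ being $(M')^*$-independent. For each $e\in\Int_{M'}(B)$, the same orthogonality-plus-minimality argument---now applied to $D_e$ against the circuits $C_{e''}$ for $e''\in\Ext_M(B)$---shows $D_e\se((E\bs X)\bs B)\cup e$, whence $D_e$ is the fundamental $(M')^*|(E\bs X)$-circuit of $e$ with minimum element $e$. The only real obstacle in the whole proof is this single orthogonality-plus-minimality lemma, which gets reused in dual form; everything else is careful bookkeeping of the decomposition of $X$ and $E\bs X$ in terms of $B$, $\Int_{M'}(B)$, and $\Ext_M(B)$.
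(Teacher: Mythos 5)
Your proof is correct, but it takes a genuinely different route from the paper's. The paper first shows directly that $\Ext_M(B)$ is a basis of $(M|X)^*$ (spanning because its complement in $X$ is the independent set $B\bs\Int_{M'}(B)$; independent via a corank argument), and then proves minimality by contradiction: starting from a hypothetical smaller basis it produces an $M$-circuit and \emph{iterates} the strong circuit elimination axiom to strip out elements of $X\bs B$ one at a time until a circuit inside the independent set $B$ appears. You instead pass to the complementary picture ($B\bs\Int_{M'}(B)$ should be the lex-maximum basis of $M|X$), and your key step --- that each fundamental circuit $C_e$ avoids $\Int_{M'}(B)$ --- is done in one shot via the orthogonality lemma $|C\cap D|\ne 1$ for an $M$-circuit $C$ and an $(M')^*$-cocircuit $D$, combined with the two minimality conditions $e=\min(C_e)$ and $e'=\min(D_{e'})$; you then invoke the greedy/fundamental-circuit optimality criterion for lex-extremal bases. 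Your lemma is valid (it is exactly where the perspective hypothesis enters, by decomposing $C$ into $M'$-circuits and applying ordinary circuit--cocircuit orthogonality), and it is attractive that the same lemma serves both halves of the claim and that the basis property and minimality come out together. Notably, the paper's argument for each half stays entirely within a single matroid ($M$ for the external half, $(M')^*$ for the internal half) and never needs the perspective property in this claim, whereas yours uses the cross-orthogonality between the two matroids; the trade-off is that you must import two standard facts (that complementation exchanges lex-min bases of $N^*$ with lex-max bases of $N$, and the exchange characterization of the lex-optimal basis), which in a written version should be stated precisely or referenced, while the paper's iterative elimination argument is self-contained modulo the strong circuit elimination axiom.
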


\begin{claim}
If $X\in \D(M,M',<)$, then $f(g(X))=X.$
\end{claim}

Each claim consists of two parts which are analogous by duality, since $((M')^*, M^*)$ is a matroid perspective if $(M, M')$ is, and since being internally active and being externally active are dual notions.
\setcounter{claim}{0}
\begin{claim}
If $B$ is independent in $M$ and spanning in $M'$, then $X = f(B) \in \D(M, M', <).$
\end{claim}

\begin{proof}
We will show that $E\bs X$ is $(M,<)$-compatible, and the proof that $X$ is $((M')^*,<)$-compatible is analogous by duality. Suppose for contradiction there is an $M$-circuit $C$ such that $$C\cap (E\bs X) = \{\min(C)\}=\{e\}.$$ We will reduce to the case where $C$ contains only elements of $B$, except possibly $e.$ 
If there is some $e'\in C\cap (E\bs B)$ with $e'\ne e$, then $e'\in X\bs B = \Ext_M(B)$, so there is an $M$-circuit $C' \se B \cup e'$ with $e'=\min(C').$ Since $e<e'$, $e\not\in C'$, so by the strong circuit elimination axiom, there is an $M$-circuit $$C''\se (C\cup C')-e'$$ containing $e$. Since all elements of $C'$ besides $e'$ are in $B$, $C''$ contains strictly fewer of $E\bs B$ than $C$ does, and it still has $e$ as its minimal element. 

Thus, we can repeatedly apply the strong circuit elimination axiom in the same manner to eventually get a circuit with $e$ as its minimal element and all other elements in $B$. But this means $e\in \Ext_M(B)$, which is impossible since we assumed $e\not\in X.$ This gives a contradiction, so $E\bs X$ must be $(M,<)$-compatible, and by an analogous argument, $X$ is $((M')^*, <)$-compatible, so $X\in \D(M,M',<).$
\end{proof}

\begin{claim}
If $X\in \D(M, M', <)$, then $B = g(X)$ is independent in $M$ and spanning in $M'.$
\end{claim}

\begin{proof}
Note that $M = M|X \oplus M/X$, so if we take the union of a basis of $M|X$ and a basis of $M/X$, we get a basis of $M$. Now $X\bs B_{\min}((M|X)^*)$ is a basis for $M|X,$ and $B_{\min}(M'/X)$ is an independent set in $M/X$ (since it contains no $(M'/X)$-circuits and therefore no $(M/X)$-circuits, so it is a subset of a basis of $M/X$). It follows that $g(X)$ is a subset of a basis of $M$ and is therefore independent in $M$, and the proof that it is spanning in $M'$ is analogous by duality.
\end{proof}

\begin{claim}\label{claim:fg=id}
If $B$ is independent in $M$ and spanning in $M'$, then $g(f(B))=B.$
\end{claim}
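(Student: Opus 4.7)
Set $X = f(B) = (B \setminus \Int_{M'}(B)) \cup \Ext_M(B)$. Since $\Int_{M'}(B) \subseteq B$ and $\Ext_M(B) \subseteq E \setminus B$ are disjoint, $B \cap X = B \setminus \Int_{M'}(B)$ and $X \setminus B = \Ext_M(B)$. The plan is to establish the two identities
\[
B_{\min}((M|X)^*) = \Ext_M(B) \qquad \text{and} \qquad B_{\min}(M'/X) = \Int_{M'}(B),
\]
which together yield $g(X) = (X \setminus \Ext_M(B)) \cup \Int_{M'}(B) = (B \setminus \Int_{M'}(B)) \cup \Int_{M'}(B) = B$, as desired. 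Because internal and external activities swap under matroid duality and $E \setminus X$ is exactly the image of $B^c := E \setminus B$ under the analog of $f$ applied to the dual matroid perspective $((M')^*, M^*)$, the second identity is obtained by applying the first to $((M')^*, M^*)$ and $B^c$ (using the standard identity $((M')^*|(E\setminus X))^* = M'/X$). So it suffices to prove the first.

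The crux is the following lemma: for every $e \in \Ext_M(B)$, the fundamental $M$-circuit $C := C_M(e,B)$ is disjoint from $\Int_{M'}(B)$. To prove it, suppose for contradiction $f \in C \cap \Int_{M'}(B)$. By definition of internal activity, there is an $M'$-cocircuit $D \subseteq (E \setminus B) \cup f$ with $\min D = f$. Using the dual perspective property that every $M'$-cocircuit is a union of $M$-cocircuits, pick an $M$-cocircuit $D'$ with $f \in D' \subseteq D$. Circuit-cocircuit orthogonality applied to the $M$-circuit $C$ and $M$-cocircuit $D'$ forces $|C \cap D'| \ge 2$, so there is $e' \in C \cap D'$ with $e' \ne f$. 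Since $e' \in C \subseteq B \cup e$ and $e' \in D' \subseteq (E \setminus B) \cup f$, the case $e' \ne f$ forces $e' = e$. Then $e \in D'$ gives $\min D' \le e$, and $D' \subseteq D$ gives $\min D' \ge \min D = f$, so $f \le e$; meanwhile $\min C = e$ with $f \in C$ forces $e \le f$. Hence $e = f$, contradicting $e \notin B$ and $f \in B$.

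Granting the lemma, for each $e \in \Ext_M(B)$ we have $C_M(e,B) \subseteq (B \cap X) \cup e$, so $(B \cap X) \cup e$ is dependent in $M|X$; since $B \cap X \subseteq B$ is already independent, this exhibits $B \cap X$ as a basis of $M|X$, with $\Ext_M(B)$ the complementary basis of $(M|X)^*$. Moreover, the same containment identifies the fundamental $M|X$-circuit of each such $e$ with respect to $B \cap X$ as $C_M(e,B)$, which has $e$ as its minimum. By the standard characterization of the lex-maximum basis (every non-basis element is the minimum of its fundamental circuit), this gives $B \cap X = B_{\max}(M|X)$, and complementing in $X$ yields $\Ext_M(B) = B_{\min}((M|X)^*)$. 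The main obstacle is the key lemma above, where the matroid perspective hypothesis is used essentially through its dual formulation together with circuit-cocircuit orthogonality; the remaining steps are bookkeeping with standard characterizations of min/max bases and an appeal to duality.
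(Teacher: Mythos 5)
Your proof is correct, but it takes a genuinely different route from the paper's for the heart of the argument. The paper also reduces to showing $\Ext_M(B)=B_{\min}((M|X)^*)$ and dualizing, but it establishes minimality by contradiction: it compares $\Ext_M(B)$ to a hypothetically smaller basis $B'$ and repeatedly applies the strong circuit elimination axiom to manufacture a circuit inside $B$ (the same technique as in Claim 1). You instead prove minimality directly via the greedy characterization of lex-extreme bases (every non-basis element is the minimum of its fundamental circuit characterizes $B_{\max}$), which requires knowing that the fundamental circuits $C_M(e,B)$ for $e\in\Ext_M(B)$ actually live inside $X$; your key lemma --- that these circuits avoid $\Int_{M'}(B)$, proved by circuit--cocircuit orthogonality combined with the dual perspective property that every $M'$-cocircuit is a union of $M$-cocircuits --- supplies exactly this and is not in the paper. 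A pleasant side effect is that the same containment immediately exhibits $B\cap X$ as a \emph{maximal} independent set of $M|X$, so you get that $\Ext_M(B)$ is a basis of $(M|X)^*$ in one stroke, whereas the paper argues spanning and independence in the dual separately (and its independence step, which passes from dependence in $(M|X)^*=M^*/(E\setminus X)$ to dependence in $M^*$, is arguably the shakier inference --- your route sidesteps it). Two small observations: your argument for this half invokes the matroid-perspective hypothesis, which the paper's corresponding half does not, though the hypothesis is of course available; and your final step leans on the standard facts that the fundamental-circuit-minimum property characterizes $B_{\max}$ and that $X\setminus B_{\max}(M|X)=B_{\min}((M|X)^*)$ --- these are doing real work and deserve at least a sentence of justification or a reference, but they are indeed standard consequences of the matroid greedy algorithm.
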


\begin{proof}
Let $X=f(B)$. We wish to show that $$\Ext_M(B)=B_{\min}((M|X)^*), \ \ \ \Int_{M'}(B) = B_{\min}(M'/X).$$ We will prove the first statement, and the second is analogous by duality.

First we claim that $\Ext_M(B)$ is a basis of $(M|X)^*.$ Since $X\bs \Ext_M(B) = B\bs \Int_{M'}(B)$ is a subset of $B$ and is therefore independent in $M|X$, $\Ext_M(B)$ spans $(M|X)^*$. It remains to show that $\Ext_M(B)$ is independent in $(M|X)^*$. If not, then $\Ext_M(B)$ is also not independent in $M^*$, so $E\bs \Ext_M(B)$ (which contains $B$) is not spanning in $M$. Thus, there is no way to extend $B$ to a basis of $M$ except by including an element of $\Ext_M(B)$, which is a contradiction, since for every $e\in \Ext_M(B)$, $B\cup e$ contains an $M$-circuit. It follows that $\Ext_M(B)$ is both independent and spanning in $(M|X)^*$, so it must be a basis.

To see that it is minimal, suppose for contradiction $\Ext_M(B) \ne B_{\min}((M|X)^*)$, and let $$B' = B_{\min}((M|X)^*), \ \ \ e=\min(B'\bs \Ext_M(B)),$$ so $e\in B$ since $e\in X\bs \Ext_M(B)$, and $e$ is smaller than all elements of $\Ext_M(B)\bs B'$ since $B'$ is a smaller basis than $\Ext_M(B)$. Since $X\bs B'$ is a basis of $M|X$, $X\bs B'\cup e$ contains a unique $M$-circuit $C$, which must also contain some $e'\in X\bs B = \Ext_M(B),$ since not all its elements can be in $B$. Thus, there is an $M$-circuit $C'$ in $B\cup e'$ which has $e'$ as its minimal element. Then $e\not\in C'$ because $e<e'$, so by the strong circuit elimination axiom there is an $M$-circuit $$C''\se (C\cup C')-e'$$ containing $e$, which therefore contains strictly fewer elements of $X\bs B$ than $C$ does, since all other elements of $C'$ are in $B$. Applying this repeatedly, we get an $M$-circuit contained entirely in $B$, which is impossible since $B$ is a independent set. It follows that $\Ext_M(B)=B_{\min}((M|X)^*)$, and we can analogously show that $\Int_{M'}(B)=B_{\min}(M'/X)$, thus $g(X)=B.$
\end{proof}

\begin{claim}
If $X\in \D(M,M',<)$, then $f(g(X))=X.$
\end{claim}

\begin{proof}
Let $B=g(X)$, and let $$B'=B_{\min}((M|X)^*), \ \ B''=B_{\min}(M'/X),$$ so $B=X\bs B' \cup B''$. We will show that $B'=\Ext_M(B)$, and the proof that $B''=\Int_{M'}(B)$ is analogous.

First we claim that $B'\se \Ext_M(B)$. If not, let $$e=\min(B'\bs \Ext_M(B)).$$ Then since $e\in B'$ and $X\bs B'$ is a basis for $M|X$, $X\bs B' \cup e$ contains a unique $M$-circuit $C$, and $X\bs B'\se B$, so $C$ is also the only $M$-circuit in $B\cup e.$ Since $e\not\in \Ext_M(B)$, there is some $e'<e$ in $C$. Then $X\bs B'-e'\cup e$ does not contain an $M$-circuit, so it is a basis of $M|X$, which means its complement in $X$, $B'-e\cup e'$, is a basis of $(M|X)^*$, contradicting the minimality of $B'$. It follows that $B'\se \Ext_M(B).$

Second, we claim that $\Ext_M(B)\se B'$. We know $\Ext_M(B)$ is an independent set in $M^*$ by the same argument as in the proof of Claim \ref{claim:fg=id}, so it suffices to show that $\Ext_M(B)\se X.$ If not, suppose $$e\in \Ext_M(B)\bs X,$$ so $e$ is the minimal element of the unique $M$-circuit $C\se B \cup e$. Then all other elements of $C$ are in $B\se X\cup B''$. If they are all in $X,$ then $C$ is an $M$-circuit intersecting $E\bs X$ only at its minimal element, contradiction compatibility. Thus, there is some $e'\in C\cap B''$. By an analogous argument to above, $B''\se \Int_{M'}(B),$ so $e'\in \Int_{M'}(B)$. Then $B-e'\cup e$ is an independent set in $M$ because it does not contain $C$, so its complement $$E\bs(B-e'\cup e) = E\bs B \cup e'-e$$ is spanning in $M^*$. But $B\cup e$ is not independent in $M$, so $E\bs B - e$ is not spanning in $M^*$ and thus must have corank 1, meaning the flat $\ol{E\bs B-e}$ is an $M^*$-hyperplane not containing $e'$. Since $e'$ is internally active in $M'$, it is the minimal element of the unique $(M')^*$-circuit $C'\se E\bs B\cup e'$, so $e\not\in C'$ because $e>e'$. Any $(M')^*$-circuit is a union of $M^*$-circuits since $((M')^*,M^*)$ is a matroid perspective, so there must be some $M^*$-circuit $C''\se E\bs B \cup e'-e$ containing $e'$. But then $$|C''\bs(\ol{E\bs B - e})|=1,$$ which is impossible since $C''$ is a circuit and $\ol{E\bs B-e}$ is a hyperplane. This gives a contradiction, so we must have $\Ext_M(B)\se X$ and therefore $\Ext_M(B)\se B'.$ Thus, $B'=\Ext_M(B),$ and similarly $B''=\Int_{M'}(B)$, so $f(B)=X.$
\end{proof}

Putting these claims together, we conclude that $f$ and $g$ are inverses, so the proof of Theorem \ref{thm:bijection} is complete. We now prove the expansion formula:

\begin{proof}[Proof of Corollary \ref{cor:expansion}]
It follows from Theorem \ref{thm:bijection} that the terms in Las Vergnas's formula (Theorem \ref{thm:lasverg}) are in bijection with the terms of our compatible sets expansion (Corollary \ref{cor:expansion}), and that if $B$ corresponds to $X$ under this bijection, then $$B\bs X = \Int_{M'}(B) = B_{\min}(M'/X), \ \ \ \ X\bs B = \Ext_M(B) = B_{\min}((M|X)^*).$$ It follows from this that $$|\Int_{M'}(B)| = r(M'/X) , \ \ \ |\Ext_M(B)| = r((M|X)^*),$$ which shows that the powers of $x$ and $y$ match in corresponding terms. Thus, it remains to show that the powers of $z$ are also the same, which is equivalent to saying that $$r_M(B)-r_{M'}(B) = r_M(X) - r_{M'}(X).$$ Since $B$ is independent in $M$ and spanning in $M'$, $$r_M(B) = |B|, \ \ \ \ r_{M'}(B) = r(M') \implies r_M(B)-r_{M'}(B) = |B| - r(M').$$ Since $\Int_{M'}(B)$ is a basis for $M'/X$, we get $$r_{M'}(X) = r(M') - r(M'/X) = r(M') - |\Int_{M'}(B)|,$$ and since $\Ext_M(B)$ is a basis for $(M|X)^*$, $$r_M(X) = |X|-r((M|X)^*) = |X|-|\Ext_M(B)| = |B|-|\Int_{M'}(B)|.$$ Subtracting these implies that the powers of $z$ are the same in both cases and completes the proof.
\end{proof}

We can now recover Kochol's original formula (Theorem \ref{thm:kochol}) as the special case $M=M'$, and Corollary \ref{cor:M'=0} as the special case $M'=0.$

\subsection*{Acknowledgements}
The author thanks Christopher Eur for the problem suggestion and for providing helpful comments and references, and the anonymous reviewers for suggesting additional references and making the observation mentioned in Remark 2.

\printbibliography

@article{Kochol21,
  title={Interpretations of the Tutte and characteristic polynomials of matroids},
  author={Martin Kochol},
  journal={Journal of Algebraic Combinatorics},
  volume={53},
  pages={1--9},
  year={2021},
  url={https://link.springer.com/article/10.1007/s10801-019-00914-6}
}

@article{Kochol22,
  title={Interpretations for the Tutte polynomial of morphisms of matroids},
  author={Martin Kochol},
  journal={Discrete Applied Mathematics},
  volume={322},
  pages={210--216},
  year={2022}
}

@unpublished{Kochol23,
  title={One-to-one correspondence between interpretations of the Tutte polynomials},
  year={2023},
  author={Martin Kochol}
}

@article{LasVerg,
  title={On the Tutte Polynomial of a Morphism of Matroids},
  author={Michel Las Vergnas},
  journal={Annals of Discrete Mathematics},
  volume={8},
  issue={Combinatorics 79},
  pages={7--20},
  year={1980}
}

@article{Crapo,
  title={The Tutte polynomial},
  author={Henry H. Crapo},
  journal={Aequationes Mathematicae},
  volume={3},
  year={1969},
  pages={211--229}
}

@article{Tutte,
  title={A contribution to the theory of chromatic polynomials},
  author={William T. Tutte},
  journal={Canadian Journal of Mathematics},
  volume={6},
  year={1954},
  pages={80--91}
}

@book{Oxley,
  title={Matroid Theory},
  author={James Oxley},
  series={Oxford Graduate Texts in Mathematics},
  year={2011},
  doi={10.1093/acprof:oso/9780198566946.001.0001},
  publisher={Oxford University Math},
  location={New York}
}

@book{Welsh,
  title={Matroid theory},
  author={Dominic J. A. Welsh},
  year={1976},
  publisher={Academic Press},
  location={New York}
}

@article{GLRV,
  title={Tutte’s dichromate for signed graphs},
  author={Andrew Goodall and Bart Litjens and Guus Regts and Lluís Vena},
  year={2021},
  month={January},
  journal={Discrete Applied Mathematics},
  volume={289},
  issue={6},
  pages={153--184},
  doi={10.1016/j.dam.2020.09.021}
}

@article{GioanDM,
    author = {Emeric Gioan},
    title = {On Tutte polynomial expansion formulas in perspectives of matroids and oriented matroids},
    journal = {Discrete Mathematics},
    volume = {345},
    year = {2022},
    issue = {112796}
}

@incollection{GioanHB,
    author = {Emeric Gioan},
    title = {The Tutte polynomial of matroid perspectives},
    editor = {Joanna A. Ellis-Monaghan and Iain Moffatt},
    booktitle = {Handbook of the Tutte Polynomial and Related Topics},
    publisher = {CRC Press},
    address = {Boca Raton, FL},
    year = {2022},
    pages = {514--531}
}

@article{LasVerg2,
    author = {Michel Las Vergnas},
    title = {The Tutte polynomial of a morphism of matroids I. Set-pointed matroids and matroid perspectives},
    journal = {Annales de l'institut Fourier},
    address = {Grenoble},
    volume = {49},
    year = {1999},
    pages = {973--1015}
}

@article{LasVerg3, 
    author = {Michel Las Vergnas},
    title = {The Tutte polynomial of a morphism of matroids 5. Derivatives as generating functions of Tutte activities},
    journal = {European Journal of Combinatorics},
    volume = {34},
    issue = {8},
    pages = {1390--1405},
    year = {2013}
}

\end{document}